\newtheorem{theorem}{Theorem}
\newtheorem{lemma}{Lemma}
\numberwithin{equation}{section}
\def\sectionmark#1{\markboth{}{}}
\begin{document}

\def\ssm{\smallsetminus}
\newcommand{\tmname}[1]{\textsc{#1}}
\newcommand{\tmop}[1]{\operatorname{#1}}
\newcommand{\tmsamp}[1]{\textsf{#1}}
\newenvironment{enumerateroman}{\begin{enumerate}[i.]}{\end{enumerate}}
\newenvironment{enumerateromancap}{\begin{enumerate}[I.]}{\end{enumerate}}

\newcounter{problemnr}
\setcounter{problemnr}{0}
\newenvironment{problem}{\medskip
  \refstepcounter{problemnr}\small{\bf\noindent Problem~\arabic{problemnr}\ }}{\normalsize}
\newenvironment{enumeratealphacap}{\begin{enumerate}[A.]}{\end{enumerate}}
\newcommand{\tmmathbf}[1]{\boldsymbol{#1}}

\def\paral{/\kern-0.55ex/}
\def\parals_#1{/\kern-0.55ex/_{\!#1}}
\def\bparals_#1{\breve{/\kern-0.55ex/_{\!#1}}}
\def\n#1{|\kern-0.24em|\kern-0.24em|#1|\kern-0.24em|\kern-0.24em|}
\newenvironment{proof}{
 \noindent\textbf{Proof}\ }{\hspace*{\fill}
  \begin{math}\Box\end{math}\medskip}

\newcommand{\A}{{\bf \mathcal A}}
\newcommand{\B}{{\bf \mathcal B}}
\def\C{\mathbb C}
\newcommand{\D}{{\rm I \! D}}
\newcommand{\dom}{{\mathcal D}om}
\newcommand{\pathR}{{\mathcal{\rm I\!R}}}
\newcommand{\Nabla}{{\bf \nabla}}
\newcommand{\E}{{\mathbf E}}
\newcommand{\Epsilon}{{\mathcal E}}
\newcommand{\F}{{\mathcal F}}
\newcommand{\G}{{\mathcal G}}
\def\g{{\mathfrak g}}
\newcommand{\HH}{{\mathcal H}}
\def\h{{\mathfrak h}}
\def\k{{\mathfrak k}}
\newcommand{\I}{{\mathcal I}}
\def\LL{{\mathbb L}}
\def\law{\mathop{\mathrm{ Law}}}
\def\m{{\mathfrak m}}
\newcommand{\K}{{\mathcal K}}
\newcommand{\p}{{\mathfrak p}}
\def\P{\mathbb{P}}
\newcommand{\R}{{\mathbb R}}
\newcommand{\Rc}{{\mathcal R}}
\def\T{{\mathcal T}}
\def\M{{\mathcal M}}
\def\N{{\mathcal N}}
\newcommand{\pnabla}{{\nabla\!\!\!\!\!\!\nabla}}
\def\X{{\mathbb X}}
\def\Y{{\mathbb Y}}
\def\L{{\mathcal L}}
\def\1{{\mathbf 1}}
\def\half{{ \frac{1}{2} }}
\def\vol{{\mathop {\rm vol}}}
\def\euc{{\mathop {\rm eul}}}

\newcommand{\term}{{1}}
\newcommand{\tm}{{t}}
\newcommand{\stm}{{s}}
\newcommand{\pole}{{y_0}}

\def\ad{{\mathop {\rm ad}}}
\def\Conj{{\mathop {\rm Ad}}}
\def\Ad{{\mathop {\rm Ad}}}
\newcommand{\const}{\rm {const.}}
\newcommand{\eg}{\textit{e.g. }}
\newcommand{\as}{\textit{a.s. }}
\newcommand{\ie}{\textit{i.e. }}
\def\s.t.{\mathop {\rm s.t.}}
\def\esssup{\mathop{\rm ess\; sup}}
\def\Ric{{\mathop{\rm {Ric}}}}
\def\ric{{\mathop{\rm ric}}}
\def\div{\mathop{\rm div}}
\def\ker{\mathop{\rm ker}}
\def\Hess{\mathop{\rm Hess}}
\def\Image{\mathop{\rm Image}}
\def\Dom{\mathop{\rm Dom}}
\def\id{\mathop {\rm id}}
\def\Image{\mathop{\rm Image}}
\def\Cyl{\mathop {\rm Cyl}}
\def\Conj{\mathop {\rm Conj}}
\def\Span{\mathop {\rm Span}}
\def\trace{\mathop{\rm trace}}
\def\ev{\mathop {\rm ev}}
\def\supp{{\mathrm supp}}
\def\Ent{\mathop {\rm Ent}}
\def\tr{\mathop {\rm tr}}
\def\graph{\mathop {\rm graph}}
\def\loc{\mathop{\rm loc}}
\def\so{{\mathfrak {so}}}
\def\su{{\mathfrak {su}}}
\def\u{{\mathfrak {u}}}
\def\o{{\mathfrak {o}}}
\def\pp{{\mathfrak p}}
\def\gl{{\mathfrak gl}}
\def\hol{{\mathfrak hol}}
\def\z{{\mathfrak z}}
\def\t{{\mathfrak t}}
\def\<{\langle}
\def\>{\rangle}
\def\span{{\mathop{\rm span}}}
\def\diam{\mathrm {diam}}
\def\inj{\mathrm {inj}}
\def\Lip{\mathrm {Lip}}
\def\Iso{\mathrm {Iso}}
\def\Osc{\mathop{\rm Osc}}
\renewcommand{\thefootnote}{}
\def\supp{\mathrm {Supp}}
\def\V{\mathbb V}
\def\vol{{\mathop {\rm vol}}}
\def\cut{{\mathop {\rm Cut}}}
\def\Lip{{\mathop {\rm Lip}}}
\def\Cyl{{\mathop {\rm Cyl}}}

\def\paral{/\kern-0.55ex/}
\def\parals_#1{/\kern-0.55ex/_{\!#1}}
\def\bparals_#1{\breve{/\kern-0.55ex/_{\!#1}}}
\def\n#1{|\kern-0.24em|\kern-0.24em|#1|\kern-0.24em|\kern-0.24em|}
\def\f{\frac}
\title{On the Semi-Classical Brownian Bridge Measure}
\author{Xue-Mei Li }
\institute{Mathematics Institute, The University of Warwick, Coventry CV4 7AL, U.K.}
%\footnote{printed \today}
\maketitle

\begin{abstract}
We prove an integration by parts formula for the probability measure induced by the semi-classical Riemmanian Brownian bridge over a manifold with a pole.
\end{abstract}

\footnote{AMS Mathematics Subject Classification :  60Dxx, 60 H07,  58J65, 60Bxx} 
%   60Gxx, 60Hxx, 58J65, 58J70

 \section{Introdcution}
 Let $M$ be a finite dimensional smooth connected complete and stochastically complete Riemannian manifold $M$ whose Riemannian distance is denoted by $r$. 
 By stochastic completeness we mean that its minimal heat kernel satisfies that $\int p_t(x,y)dy=1$.  Denote by $C([0,1]; M)$ the space of continuous curves: $ \sigma: [0,1]\to M$, a Banach manifold modelled on the Wiener space. A chart containing a path $\sigma$ is given by a tubular neighbourhood of $\sigma$ and the coordinate map  is induced from the exponential map given by the Levi-Civita connection on the underlying finite dimensional manifold.
 For $x_0, y_0 \in M$ we denote by $C_{x_0}M$ and 
$C_{x_0, y_0}M$, respectively,  the based and the pinned space of continuous paths over $M$: \begin{equs}C_{x_0}M\;\;&=\{ \sigma \in C([0,1]; M): \quad \sigma(0)=x_0\}, \\
C_{x_0, y_0}M&=\{\sigma \in C([0,1]; M): \quad \sigma(0)=x_0, \quad  \sigma(1)=y_0\}.
\end{equs}
The pullback tangent bundle of $C_{x_0}M$ consisting of continuous $v: [0,1]\to TM$ with $v(0)=0$ and $v(t)\in T_{\sigma(t)}M$ where 
$\sigma \in C([0,1]; M)$ which for each $\sigma$ can be identified by parallel translation with continuous paths on $T_{x_0}M$, the latter is identified with $\R^n$ with a frame $u_0$. 
To define gradient operators we make a choice of a family of $L^2$ sub-spaces together with an Hilbert space structure,
and so we have a family of continuously embedded $L^2$ subspaces $\HH_\sigma$ and the $L^2$ sub-bundle $\HH:=\cup_\sigma \HH_\sigma$.
Firstly denote by $H$ the Cameron-Martin space over $\R^n$,    \begin{equation*}
H:= \bigg\lbrace h\in C( [0,1]; \R^n): h(0)=0, |h|_{H^1}: =\left(\int_0^1 |\dot h_s|^2 ds\right)^{\f 12} < \infty\bigg\rbrace,
\end{equation*}
with $H^0$ its subset consisting of  $h$ with $h(1)=0$.
If $\parals_\cdot(\sigma)$ denotes stochastic parallel translation along a path $\sigma$
we denote 
by $\HH_\sigma$ and $\HH^0_\sigma$ the Bismut tangent spaces:
\begin{equs}\HH_\sigma &=\{ \parals_\cdot(\sigma) h: h\in H\},  \qquad
\HH^0_\sigma =\{ \parals_\cdot(\sigma) h: h\in H,  h(1)=0\},
\end{equs} specifying
respectively the `admissible' tangent vectors at $\sigma\in C_{x_0}M$ and vectors at $\sigma\in C_{x_0, y_0}M$. 
These vector spaces are given the inner product inherited from the Cameron-Martin space $H$.

For an $L^2$ analysis on $C_{x_0, y_0}M$ we need a probability measure on it which is usually taken to be the probability distribution of the conditioned Brownian motion. The heat kernel measure, the distribution of a Brownian sheet, offers also an alternative measure, see \cite{Malliavin-loop, Driver-Lohrenz, Norris-sheets}.
%It is proven such measures are equivalent to the Brownian bridge measure \cite{Aida-Driver} on the identity component of the loop space over
%a connected compact Lie group.
See also \cite{Li-hypoelliptic-bridge} for a study of  the measure induced by a conditioned hypoelliptic stochastic process.
If we suppose that $M$ has a pole $y_0$, by which we mean that the exponential map $\exp_\pole: T_\pole M\to M$ is a diffeomorphism,  another probability measure, the probability distribution of the semi-classical Riemannian bridge, becomes available to us. For a simply connected Riemannian manifold with non-negative sectional curvature, every point is a pole.
 We denote this measure by $\nu=\nu_{x_0,\pole}$ and denote by $L^2(C_{x_0,y_0}M;\R)$ the corresponding $L^2$ space.
% If a function $F$ is $BC^1$, it determines a $H$ gradient $\nabla F$ such that
%$$\<\nabla F, \parals_\cdot(h)\>_{\HH_1(\sigma)}=dF(\parals_\cdot(h)),
% \qquad \<\nabla^0 F, \parals_\cdot(h)\>_{\HH_1^{0}(\sigma)}=dF(\parals_\cdot(h)).$$

A semi-classical Riemannian Brownian bridge $(\tilde{x}_s, s\le 1)$
is a time dependent strong Markov process with generator $\frac{1}{2}\triangle + \nabla \log k_{1-s}(\cdot, y_0)$ where,   $$k_t(x_0, y_0):=(2\pi t)^{-\f n 2}e^{-\f {r^2(x_0, y_0)}{2t}}
J^{-\f 12}(x_0),$$
and $J(y)= | \det D_{\exp_\pole^{-1}(y)} \exp_\pole|$ is the Jacobian determinant of the exponential map at $y_0$.
Semi-classical Riemannian Brownian bridges (semi-classical bridge for short) were introduced by K. D. Elworthy and A. Truman  \cite{Elworthy-Truman-81}. 
For further explorations in this direction see  \cite{Elworthy-Truman-Waltling} and  \cite{Ndumn-09}. If $p_t$ is the heat kernel, the Brownian bridge is a Markov process with generator 
$\f 12 \Delta + \nabla \log p_{1-t}(\cdot, y_0)$.  Let us consider the two time dependent potential functions that drives the Brownian motion to the terminal value.  
They are close to each other as $t\to 1$, by Varadhan's asymptotic relations \cite{Varadhan}: $ (1-t) \log p_{1-t}(x,y_0)\sim-\f 1 2r^2(x,y_0)$.  There is also the relation  $\lim_{t\to 1} (1-t) \nabla \log p_{1-t}(x,y)=-\dot \gamma(0)$ where $\gamma$ is normal geodesic from $y_0$ to $x$. % see  \cite[J.-M. Bismut]{Bismut}.
The two drift vector fields
$\nabla \log p_{1-t}(\cdot, y_0)$ and $\nabla \log k_{1-t}(\cdot, y_0)$ differ by $-\f 12 \nabla \log J$ near the terminal time.
%We should also  have a similar comparison on the hessians of the potentials. 

Let us consider the unbounded linear differential operator $d$ on $L^2(C_{x_0,y_0}M;\R)$  taking values in $L^2(\cup_\sigma\HH_\sigma^*)$ where for $v\in \cup_\sigma\HH_\sigma^*$,
$$\|v_\cdot(\cdot)\|:=\left(\int_{C_{x_0,y_0}M }\left(|\parals_\cdot^{-1}v_\cdot(\sigma)|_H\right)^2  d\nu(\sigma)\right)^{\f 12}.$$
Another norm can be given, taking into accounts of the damping effects of the Riccic curvature, which will be discussed later. %with initial domain $BC^1$ functions
As the distance function from the semi-classical bridge to the pole is precisely the $n$-dimensional Bessel bridge where $n=\dim(M)$, the advantage of the semi-classical Brownian bridge measure is that it is easier to handle, which we demonstrate by studying the elementary property of the divergence operator.
Our main result is an integration by parts formula for $d$.   Such a formula is believed to be equivalent to an integration by parts formula. A proof for the equivalence was given in \cite{Elworthy-Li-ibp} for compact manifold and for the Brownian motion measure by induction. The same method should work here. However since it is a bridge measure the current method has its advantages. First order Feyman-Kac type formulas together with estimates for the gradient of the Feyman-Kac kernel using semi-classical bridge process and the damped stochastic parallel translation was obtained in  \cite{Li-Thompson}.
%see e.g  \cite[A. Eberle] {Eberle} and \cite[X. Chen, X.-M. Li, and B. Wu]{CLW}. 

  Denote by $OM$  the space of orthonormal frames over $M$ and $\lbrace H_i \rbrace$ the canonical horizontal vector fields on $OM$ associated to an orthonormal basis $\{e_i\}$ of $\R^n$ so that $H_i$ is the horizontal lift of $ue_i$. For a tangent vector $v$ on $M$, we will denote by $\tilde v$ the horizontal lift of $v$ to $TOM$. Let  $\lbrace \Omega,\mathcal{F},\mathcal{F}_t,\mathbb{P}\rbrace$ be a filtered probability space on which is given a family of independent one-dimensional Brownian motions $\lbrace B^i \rbrace$. We define $B_t=(B_t^1, \dots, B_t^n)$. Let $u_0\in \pi^{-1}(x)$ be a frame at $x$, $u_t$ and $\tilde u_t$ be the solution to the stochastic differential equations, 
\begin{equation}\label{horizontal2}
d {u}_s = \sum_{i=1}^n  H_i({u}_s)\circ dB^i_s, \quad
d \tilde{u}_s = \sum_{i=1}^n  H_i(\tilde{u}_s)\circ dB^i_s + \tilde{A_s}(\tilde{u}_s)ds, \quad  \tilde u_0=u_0,
\end{equation}
where $\circ$ denote Stratonovich integration and $A_s=\nabla \log k_{1-s}(\cdot, y_0)$. Then $\tilde{x}_s := \pi(\tilde{u}_s)$ is a semi-classical Brownian bridge from $x_0$ to $y_0$ in time~$1$. 
 Let $\Ric_x$ denote the Ricci curvature at $x \in M$, by ${\Ric}^{\sharp}_x:T_xM \rightarrow T_xM$ we mean the linear map given by the relation $\< {\Ric}^\sharp_x u,v\> = {\Ric_x}(u,v)$.
%  we denote by $\tilde{W}_t$ the solutions to the following equations along the sample paths of $(\tilde x_t)$, 
%\begin{equation}\label{damped2}
%{D \over dt }\tilde{W}_t=-{1\over 2}  {\Ric}_{\tilde x_t}^{\sharp}(\tilde{W}_t), \quad \tilde W_0 = {\id}_{T_{x_0}M}.
%\end{equation}
%The stochastic process $\tilde W_t$ takes values in the space of linear maps from $T_{x_0}M\to T_{\tilde x_t}M$ and
%the notation ${D\over dt}\tilde W_t $ denotes $\tilde u_t \frac{d}{dt} (\tilde u_t^{-1} \tilde W_t)$.
%For $u\in OM$, we define the linear operator from $\R^n$ to $\R^n$: $\ric_u=u^{-1}{\Ric}_{\pi(u)}^{\sharp} u$.
%Thus $\tilde w_s:={\tilde u}_s^{-1}{\tilde W}_s$ solves the equation $\f d {ds}\tilde w_s = -\frac{1}{2}\ric_{\tilde u_s}  w_s, \quad w_0 = {\id}_{\R^n}$.  
%The same letter $\tilde W_t$ denotes also the vector fields along the sample paths of $\tilde x_t$ with initial value $\tilde W_0$.  
%Denote by $\G_t$ the canonical filtration on the path space, completed with respect to the semi-classical Brownian bridge measure $\nu$. 

Denote $r=r(\cdot, y_0)$ for simplicity. We will need the following geometric conditions.
Set \begin{equation}
\Phi=\frac{1}{2}J^{\frac{1}{2}}\triangle J^{-\frac{1}{2}}=\f 14|\nabla {\log} J|^2-\f 1 4 \Delta ({\log }J).
\end{equation}

\noindent
{\bf C1:} The Ricci curvature  is bounded.\\
{\bf C2:}  $|\nabla \Phi|+|\nabla( \log J)|\le c(e^{ a r^2}+1)$ for some $c>0$ and $a$ is an explicit constant to be given later.\\
{\bf C3:} $\Phi$ is bounded from below.\\
{\bf C4:} For each $t$, $k_t$ and $|\nabla k_t|$ are bounded,   $|\nabla \Phi|$ is bounded.\\
The condition that the Ricci curvature is bounded ensures that the solution to the canonical SDE is differentiable in the sense of Malliavin calculus. It also implies that $|\tilde W_t|$ is bounded and that the integration by parts formula holds for
the Brownian motion measure. Observe that $k_t$ and $|\nabla k_t|$  are bounded if $rJ^{-\f 12} $ and $J^{-\f 12} \nabla \log J^{-\f12}$ grow at most exponentially. Here we do not strive for the best possible conditions, as the optimal conditions will manifest themselves when  Clark-Ocone formula and  Poincar\'e inequalities are studied.

%$e^{-{\rho( \cdot, y_0)}} J^{-\f 12}$,  and 
%Also, if  $h$ is a random vector field on $C_{x_0}M$, i,.e. $h :C_{x_0}M\to H$, we assume \\
%{\bf C4:} 
% Each $h_t$ is adapted to $\G_t$ and  $\E \left[\int_0^1 |h(t, \tilde x_t)|^{2+\epsilon} dt\right]<\infty $.

Our main results is the following integration by parts theorem.

  \begin{theorem}\label{thm1}
  Assume {\bf C1- C4} hold. 
   Then for any $F, G\in\Cyl$ and $h\in H^0$ the following integration by parts formula hold.
 \small
 \begin{equs}
&\int_{C_{x_0, y_0}M} G(\tilde x_\cdot)dF\left(\tilde u_\cdot(\sigma)  h_\cdot \right) \nu(d\sigma)+\int_{C_{x_0, y_0}M} F(\tilde x_\cdot)dG\left(\tilde u_\cdot(\sigma)  h_\cdot \right) \nu(d\sigma)\\
=&\E \left[(FG) (\tilde x_\cdot ) \int_0^1 \<\dot  h_s+\f 1 2 \tilde u_s^{-1}{\Ric}^{\sharp} 
(\tilde u_s h_s), d\tilde B_s\>\right]
+\E\left[(FG)(\tilde x_\cdot) \int_0^1 d\Phi(\tilde u_s h_s)ds \right].
\end{equs}
\normalsize
Here $d\tilde{B}_s = dB_s -\tilde u_s^{-1} \nabla \log k_{1-s}(\tilde{x}_s)\,ds$. In particular $d: \Cyl \subset  L^2(C_{x_0, y_0}M)\to L^2(\cup_\sigma \HH_\sigma)$ is closable, the domain of $d^*$ contains $\Cyl$ and
$$d^*G=-dG+G \int_0^1 \<\dot  h_s+\f 1 2 \tilde u_s^{-1}{\Ric}^{\sharp} 
(\tilde u_s h_s), d\tilde B_s\>+G\int_0^1 d\Phi(\tilde u_s h_s)ds.$$
  \end{theorem}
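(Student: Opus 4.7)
The plan is to transfer Bismut--Driver's integration by parts from the Brownian motion measure $\mathbb{Q}$ on $C_{x_0}M$ to the semi-classical bridge measure $\nu$ on $C_{x_0,y_0}M$ by Girsanov, using the Feynman--Kac structure of the semi-classical bridge.

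A direct computation from the explicit formula $k_t=(2\pi t)^{-n/2}e^{-r^2/(2t)}J^{-1/2}$ together with the off-cut-locus identity $\Delta r=(n-1)/r+\langle\nabla r,\nabla\log J\rangle$ (valid globally since $y_0$ is a pole) shows that
\[
\partial_t k_t = \tfrac{1}{2}\Delta k_t - \Phi\, k_t.
\]
Applying It\^o's formula to $\log k_{1-t}(\tilde x_t)$ under $\mathbb{Q}$ therefore yields the Girsanov density
\[
D_t := \frac{d\nu|_{\F_t}}{d\mathbb{Q}|_{\F_t}} = \frac{k_{1-t}(\tilde x_t,y_0)}{k_1(x_0,y_0)}\exp\Bigl(-\int_0^t \Phi(\tilde x_s)\,ds\Bigr),\qquad t<1,
\]
and identifies the process $\tilde B$ of the theorem as the driving $\mathbb{Q}$-Brownian motion of $\tilde x$.

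I would then take $t<1$ larger than the time support of $F,G$ and apply the standard Bismut--Driver formula under $\mathbb{Q}$ to the functional $FG\,D_t$. The product rule $d(FGD_t)=D_t(G\,dF+F\,dG)+FG\,dD_t$ together with $\E_\nu[\cdot]=\E_\mathbb{Q}[D_t\,\cdot]$ for $\F_t$-measurable integrands reorganises the identity into
\[
\E_\nu\bigl[G\,dF(\tilde u h)+F\,dG(\tilde u h)\bigr]
=\E_\nu\Bigl[FG\int_0^t\langle \dot h+\tfrac{1}{2}\tilde u^{-1}\Ric^\sharp(\tilde u h),d\tilde B\rangle\Bigr]
-\E_\nu\bigl[FG\cdot D_t^{-1}dD_t(\tilde u h)\bigr],
\]
and the chain rule applied to $\log D_t$ gives
\[
D_t^{-1}dD_t(\tilde u h)= d\log k_{1-t}(\tilde u_t h_t)-\int_0^t d\Phi(\tilde u_s h_s)\,ds.
\]
The restriction $h\in H^0$ is precisely what is needed to let $t\to 1$: the estimate $|h_t|^2\le (1-t)\int_t^1 |\dot h|^2\,ds=o(1-t)$ controls the boundary contribution $d\log k_{1-t}(\tilde u_t h_t)$ against the blow-up $|\nabla\log k_{1-t}|=O(r/(1-t))$, while the Bessel-bridge scaling $r(\tilde x_t,y_0)=O(\sqrt{1-t})$ keeps the product integrable in the limit. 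The remaining pieces assemble into the theorem.

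The main obstacle is the justification of this limit under C1--C2. The drift $\nabla\log k_{1-s}$ blows up like $r/(1-s)$ and the Feynman--Kac factor $\exp(-\int\Phi)$ must be integrated against the tails of $r(\tilde x_s,y_0)$. Because this radial process is distributed as an $n$-dimensional Bessel bridge, its Gaussian-type tails absorb the $e^{ar^2}$-growth of C2 for the calibrated constant $a$; C3 bounds $\exp(-\int\Phi)$ uniformly from above, so that dominated convergence applies in the limit $t\to 1$. Closability of $d$ and the explicit form of $d^*G$ then follow by the standard argument: setting $G\equiv 1$ in the bilinear identity isolates the adjoint so that $\Cyl\subset\Dom(d^*)$, and the $L^2$-boundedness of the right-hand side follows from the lower bound on $\Phi$ together with the boundedness of $k_t$, $|\nabla k_t|$ and $|\nabla\Phi|$ in C4.
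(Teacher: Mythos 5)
Your proposal follows essentially the same route as the paper: the Girsanov density $M_t=\frac{k_{1-t}(x_t,y_0)}{k_1(x_0,y_0)}\exp(-\int_0^t\Phi(x_s)ds)$ arising from the Feynman--Kac identity for $k$, application of the Bismut--Driver formula under the Brownian motion measure to $FG\,M_t$ for $t<1$, and the passage $t\to 1$ controlled by the Bessel-bridge scaling $r(\tilde x_t,y_0)=O(\sqrt{1-t})$ against $|h_t|^2=o(1-t)$ for $h\in H^0$. The paper merely isolates the convergence of the singular integral and the vanishing of the boundary term $\langle\nabla\log k_{1-t}(\tilde x_t),\tilde u_t h_t\rangle$ into a separate preparatory lemma, but the substance is identical.
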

%The condition on $\Phi$ in {\bf C2}  can be replaced by that $\Phi$ belongs to the domain of $d$ as an operator on the L2 space with respect to the Brownian motion measure and a growth condition such that $\Phi$ grows not faster than linearly at the infinity. 
%In particular that the gradient operator $\nabla$, in the sense of Malliavin calculus, from  $L^2(C_{x_0, y_0}M; \nu)$ to $H$ could be an alternative measure for the $L^2$ analysis on $C_{x_0, y_0}M$. 
   For based path space over a compact manifold, with  Brownian motion measure (the Wiener measure), this was proved in \cite{Driver92},  for
non-compact manifolds see
\cite{Elworthy-Li-ibp, Elworthy-Li-icm}, 
\cite{Fang-Wang},  \cite{Thalmaier-Wang}, and 
\cite{Arnaudon-Driver-Thalmaier}. For  pinned manifolds with measure coming from the classical Brownian bridge measure, this was proved in \cite{Driver94}  and \cite{Hsu97}.

Let us now clarify the definition of $d$. A common definition for $d$, which we use,  is to take its initial domain to be $\Cyl$,  the set of cylindrical functions of the form
$F(\sigma)=f(\sigma_{t_1}, \dots, \sigma_{t_m})$ where $m\in \N$,
$0<t_1<t_2<\dots <t_m< 1$, and $f$ is a $BC^1$ function on the $m$-fold product space of $M$, or $\Cyl_0$ the subset containing $f(\sigma_{t_1}, \dots, \sigma_{t_m})$ where  $f$ is compactly supported. 
The $H$-derivative (Malliavin derivative) of $F$  in the direction of
$u_\cdot(\sigma) h_\cdot\in T_\sigma C_{x_0}M$~is:
$$(dF)(\parals_\cdot(\sigma) h_\cdot)=\sum_{k=1}^m \partial_{k}f \big(\parals_{t_k}(\sigma) h_{t_k}\big),$$
where $\partial_{k}f $ denotes the derivative of $f$ in its $k$th component and $\paral$ denotes parallel translation and identified with $u$ in the sequel. Denote by
$G(s,t)$ and $G^0(s,t)$, respectively, the Green's functions of $\f d {ds}$ on $(0,1)$ with suitable Dirichlet conditions: $G(s,t)=s\wedge t$ and $G^0(s,t)=s\wedge t -st$.
Then \begin{equs}(\nabla F)(\sigma)(t)&=\sum_{k=1}^m G(t_k,t) \parals_{t_k,t}(\sigma)\nabla_{k}f (\sigma_{t_1},\sigma_{t_2},\dots, \sigma_{t_m}),\\
(\nabla^0 F)(\sigma)(t)&=\sum_{k=1}^m G^0(t_k,t) \parals_{t_k,t}(\sigma)\nabla_{k}f (\sigma_{t_1},\sigma_{t_2},\dots, \sigma_{t_m}),
\end{equs}
where  $\nabla_k f$ denotes the gradient of $f$ in the $k$th variable.
 We have
\begin{equs}\|\nabla F\|^2&=\sum_{i,j=1}^m G(t_k, t_j) \<\paral_{t_k, t_j}\nabla_k f, \nabla_jf\>,\\ \quad \|\nabla^0 F\|^2&=\sum_{i,j=1}^m G^0(t_k, t_j) \<\paral_{t_k, t_j}\nabla_k f, \nabla_jf\>.
\end{equs}
It is an open problem whether the closure of $d$ with initial domain $BC^1$ agrees wth the closure of $d$ with initial value the cylindrical functions. This is the Markov uniqueness problem, this was studied In \cite{Elworthy-Li-CR}  where it was only proved that the latter including $BC^2$.

\section{Proof of Theorem \ref{thm1}}
\label{proof}

To clarify the singularities at the terminal time we first prove a lemma concerning the divergence of a suitable vector field on the path space.
Let $\tilde u_t$ be as defined by (\ref{horizontal2}), $\tilde x_t=\pi(\tilde u_t)$.  Recall that   $k_t(x_0, y_0)=(2\pi t)^{-\f n 2}e^{-\f {\rho^2(x_0, y_0)}{2t}}J^{-\f 12}(x_0)$
and $\tilde{B}_s = B_s - \tilde u_s^{-1}\nabla \log k_{1-s}(\tilde{x}_s, y_0)\,ds$. 
The reference to $y_0$ will be dropped from time to time for simplicity. Define $\ric_u=u^{-1}\Ric^\sharp u$.
%Denote by $\Hess(f)$ the Hessian of a function~$f$.

\begin{lemma}\label{lemma1}
Assume stochastic completeness,  {\bf C2}, and $h\in H^{0}$. Then the following integral exists,
$$\int_0^1\left \<\dot h_s+\f 1 2 \ric_{\tilde u_s} (h_s), d\tilde B_s\right\>.$$
Furthermore, 
$$\lim_{t\to 1}\E\left( \left\<\nabla \ {\log} {k_{1-t}(\cdot)}, \tilde u_t h_t\right\>\right)^2=0,$$ 
$\int_0^t \<\dot h_s+\f 1 2 \ric_{\tilde u_s} (h_s),  d\tilde B_s\>$ converges, as $t\to 1$, in
$L^2(\Omega, \P)$; and
$$\begin{aligned}  \int_0^1\left \<\dot h_s+\f 1 2 \ric_{\tilde u_s} (h_s), d\tilde B_s\right\>
=&\int_0^1\left \<\dot h_s+\f 1 2 \ric_{\tilde u_s} (h_s), d B_s\right\>+\int_0^1 d\Phi (\tilde u_s h_s) ds\\
&+\int_0^1 \nabla d\left( { \log} k_{1-s} (\tilde x_s, y_0)\right) (\tilde u_sdB_s, \tilde u_sh_s).
\end{aligned}$$
\end{lemma}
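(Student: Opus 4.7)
The plan is to combine It\^o's formula for the $\R^n$-valued process $\eta_s := \tilde u_s^{-1}\nabla \log k_{1-s}(\tilde x_s)$ with an integration by parts in the time variable $s$, so that $\Phi$ appears as the discrepancy between $k_t$ and a genuine heat kernel. First I would derive the PDE
\begin{equation*}
\partial_t \log k_t - \tfrac 12 \Delta \log k_t - \tfrac 12 |\nabla \log k_t|^2 = -\Phi
\end{equation*}
by direct computation from $\log k_t = -\tfrac n 2 \log(2\pi t) - \tfrac{r^2}{2t} - \tfrac 12 \log J$, using the pole identity $\Delta r = \tfrac{n-1}{r} + \partial_r \log J$ to make the $1/r$ singularities cancel. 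Taking a gradient and invoking the Bochner formula $\nabla \Delta f = \Delta^H \nabla f - \Ric^\sharp(\nabla f)$ then gives, for $V_s := \nabla \log k_{1-s}$,
\begin{equation*}
\partial_s V_s = \tfrac 12 \Ric^\sharp V_s - \tfrac 12 \Delta^H V_s - \nabla_{V_s} V_s + \nabla\Phi.
\end{equation*}

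Next I would view $\tilde V_s(u) := u^{-1}V_s(\pi u)$ as an $\R^n$-valued function on $OM$, with $H_i \tilde V_s(u) = u^{-1}\nabla_{ue_i}V_s$ and $\sum_i H_iH_i \tilde V_s(u) = u^{-1}\Delta^H V_s$. Applying It\^o's formula to $\eta_s = \tilde V_s(\tilde u_s)$, the $\partial_s V_s$ term, the Stratonovich correction $\tfrac 12\Delta^H V_s$, and the drift contribution $\nabla_{V_s}V_s$ combine with the PDE to produce the clean identity
\begin{equation*}
d\eta_s = \tilde u_s^{-1}\nabla_{\tilde u_s dB_s}V_s + \tilde u_s^{-1}\bigl[\tfrac 12\Ric^\sharp V_s + \nabla\Phi\bigr]ds.
\end{equation*}
Since $h$ is absolutely continuous with $h_0=0$, an integration by parts $\int_0^t\langle\dot h_s,\eta_s\rangle ds = \langle h_t,\eta_t\rangle - \int_0^t\langle h_s,d\eta_s\rangle$ together with $\langle h_s,\tilde u_s^{-1}\Ric^\sharp V_s\rangle = \langle\ric_{\tilde u_s}(h_s),\eta_s\rangle$, $d\tilde B_s = dB_s - \eta_s ds$, and $v_s := \dot h_s + \tfrac 12\ric_{\tilde u_s}(h_s)$ yields the pre-limit identity
\begin{equation*}
\int_0^t\langle v_s,d\tilde B_s\rangle = \int_0^t\langle v_s,dB_s\rangle - \langle h_t,\eta_t\rangle + \int_0^t d\Phi(\tilde u_s h_s)ds + \int_0^t\nabla d\log k_{1-s}(\tilde u_s dB_s,\tilde u_s h_s).
\end{equation*}

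To finish I would pass to the limit $t\to 1$ in $L^2$. For the boundary term, write $\nabla\log k_{1-t} = -\tfrac{r(\tilde x_t)}{1-t}\nabla r - \tfrac 12\nabla\log J$ and exploit that $r(\tilde x_\cdot)$ is an $n$-dimensional Bessel bridge to $0$, giving $\E r^2(\tilde x_t) = (1-t)^2 r_0^2 + nt(1-t) = O(1-t)$; combined with $|h_t|^2 \le (1-t)\int_t^1|\dot h_s|^2 ds$ (since $h_1=0$) this forces $\E|\langle\nabla\log k_{1-t},\tilde u_t h_t\rangle|^2\to 0$, the $\nabla\log J$ piece being handled via {\bf C2} and exponential integrability of $r^2$ under the bridge. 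The first It\^o integral converges in $L^2$ by {\bf C1}, the $\Phi$-integral by {\bf C2}, and the Hessian It\^o integral converges because the eikonal identity $\Hess r(\nabla r,\cdot)=0$ kills the leading cross term and $r\|\Hess r\|_{op}$ stays bounded near the pole, reducing the expected quadratic variation to a multiple of $\int_0^1 |h_s|^2/(1-s)^2 ds$, which is finite by Hardy's inequality $\int_0^1|h_s|^2/(1-s)^2 ds \le 4\|\dot h\|_{L^2}^2$ valid because $h_1=0$. The main obstacle is precisely this last step: the $(1-s)^{-2}$ singularity in the Hessian integrand is borderline divergent, and it is only the combination of the eikonal cancellation with Hardy's inequality that makes the individual Itô integral $L^2$-convergent rather than merely the overall sum in the identity.
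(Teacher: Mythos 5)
Your proposal is correct and follows essentially the same route as the paper: the same PDE identity $\partial_s\log k_{1-s}+\tfrac12\Delta\log k_{1-s}+\tfrac12|\nabla\log k_{1-s}|^2=\Phi$, the Weitzenb\"ock/Bochner formula to produce the $\Ric^\sharp$ term, the Bessel-bridge identification of $r(\tilde x_t)$ giving $\E r^2(\tilde x_t)=O(1-t)$ together with $|h_t|^2\le(1-t)\int_t^1|\dot h|^2$ for the boundary term, and the Hardy-type bound on $\int_0^1|h_s|^2(1-s)^{-2}ds$ for the Hessian stochastic integral. The only cosmetic difference is that you apply It\^o's formula to the vector process $\eta_s=\tilde u_s^{-1}\nabla\log k_{1-s}(\tilde x_s)$ and then integrate by parts in time, whereas the paper applies It\^o directly to the scalar pairing $\langle\nabla\log k_{1-s}(\tilde x_s),\tilde u_sh_s\rangle$.
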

\begin{proof}
The singularities in the integral $\int_0^1\left \<\dot h_s+\f 1 2 \ric_{\tilde u_s} (h_s), d\tilde B_s\right\>$ come from the involvement of $ \nabla \log k_{1-s}(\tilde{x}_s,y_0)$ and we only need to worry about
\begin{equation}
\label{singular}
\alpha_t:=\int_0^t \left\<\dot h_s+\f 1 2 \ric_{\tilde u_s} (h_s),\tilde u_s^{-1}\nabla \log k_{1-s}(\tilde{x}_s,y_0)\right\>ds.
\end{equation}
We  integrate by parts to deal with $\int_0^t \left\<\dot h_s,\tilde u_s^{-1}\nabla \log k_{1-s}(\tilde{x}_s,y_0)\right\>ds$, which involves the derivative of $h_s$.
%Integration by part applied to the term involving $\dot h_s$, using $h_0=0$, we see that
%$${\begin{split}&\int_0^t \left\<\dot h_s,\tilde u_s^{-1}\nabla \log k_{1-s}(\tilde{x}_s,y_0)\right\>\\
%&=\left\<h_t,\tilde u_t^{-1}\nabla \log k_{1-t}(\tilde{x}_t,y_0)\right\>
%-\int_0^t \left\<h_s, d(\tilde u_s^{-1} \nabla \log k_{1-s}(\tilde{x}_s,y_0))\right\>.
%\end{split}}
%$$
%where $d$ in $d\tilde u_s^{-1}\nabla \log k_{1-s}(\tilde{x}_s,y_0)$ denotes stochastic differential.
 Since $\f D{ds} (u_sh_s)=u_s\dot h_s$, by stochastic calculus applied to 
$d \left( {\log} k_{1-s}\right)(u_s h_s)$, where $d$ denotes spatial differentiation with respect to the $M$-valued variable,
we see that 
\begin{equs}
& \left\< \nabla {\log} {k_{1-t}(\tilde x_t)}, \tilde u_t h_t\right\>\\
=&\int_0^t  \left\<\nabla { \log} k_{1-s} (\tilde x_s), \tilde u_s\dot h_s \right\>\, ds+\sum_{i=1}^n\int_0^t \nabla d\left( { \log} k_{1-s}\right) (\tilde u_se_i, \tilde u_sh_s) dB_s^i\\
&+\int_0^t \nabla d\left( { \log} k_{1-s})  \left(\nabla {\log} k_{1-s} (\tilde x_s), \tilde u_sh_s \right)\right)\, ds\\
&+\int_0^t\left( \f 1 2\trace \nabla^2 + {\partial_r} \right)\left( D \left( {\log} {k_{1-s}(\tilde x_s)}\right)\right)(\tilde u_s h_s)\, ds,
\end{equs}
the first term on the right hand side being $\alpha_t$.
Since  $\nabla  {\log} {k_{1-s}(x)}=-\f {r (x) \nabla r(x)}{1-s} +\nabla {\log} (J^{-\f 12})$, $\Delta r={n-1 \over r} +\<\nabla r, \nabla \log J\>$,  the following set of formulas are easy to verify.
\begin{equation}
\label{formulas}
{\begin{split}
\Delta {\log} k_{1-s}&=-\f n {1-s} -\f {r\<\nabla r,\nabla \log J\>}{1-s}-\f 12 \Delta(\log J),\\
{\partial_ r} \log k_{1-s} &=\f n  {2(1-s)}-\f {r^2} {2(1-s)^2},\\
|\nabla \log k_{1-s}|^2&=\f {r^2}{(1-s)^2}+\f 12|\nabla \log J|^2+\f {r\<\nabla r,\nabla \log J\>}{1-s}.
\end{split}}
\end{equation}
It follows that 
$$\left(\f 1 2 \Delta + {\partial_r}\right)\left( {\log} {k_{1-s}}\right) +\f 1 2|\nabla \log k_{1-s} |^2=\f 14|\nabla {\log} J|^2-\f 1 4 \Delta ({\log }J)=\Phi.$$
Let $\Delta^1:=(dd^*+d^*d)$ denote the Laplace-Beltrami Kodaira operator on differential 1-forms. 
By the Weitzenb\"ock formula,
$\left( \f 1 2\trace \nabla^2 + {\partial_r} \right)d =\left( \f 1 2\Delta^1 d +\f 1 2 \Ric^{\sharp}d+ {\partial_r}d \right),$
 and consequently,
\begin{equs}
&\left( \f 1 2\trace \nabla^2 + {\partial_r} \right)d \left( {\log} {k_{1-s}(\tilde x_s)}\right)\\
=&d\left( \f 1 2 \Delta + {\partial_r} \right) \left( {\log} {k_{1-s}(\tilde x_s)}\right)
+\f 1 2 \Ric^{\sharp}\left(d {\log} {k_{1-s}(\tilde x_s)}\right)\\
=&-\f 1 2 d(|\nabla {\log k}_{1-s}(\cdot) |^2)+d\Phi
+\f 1 2 \Ric^{\sharp}\left(d {\log} {k_{1-s}(\tilde x_s)}\right).
 \end{equs}
 Thus
 \begin{equs}{}& \nabla d \left( { \log} k_{1-s})  \left(\nabla {\log} k_{1-s} (\tilde x_s), \tilde u_sh_s \right)\right)
  +\left( \f 1 2\trace \nabla^2 +\f {\partial} {\partial r} \right)\left( d \left( {\log} {k_{1-s}}\right)\right)(\tilde u_s h_s)\\
  &=d\Phi( \tilde u_sh_s)
+\f 1 2 \Ric\left(\nabla{\log} {k_{1-s}(\tilde x_s)}, \tilde u_sh_s\right).
\end{equs}
Let us return to $ \left\< \nabla {\log} {k_{1-t}(\tilde x_t)}, \tilde u_t h_t\right\>$:
\begin{equs}
& \left\< \nabla {\log} {k_{1-t}(\tilde x_t)}, \tilde u_t h_t\right\>\\
=&\int_0^t  \left\<\nabla { \log} k_{1-s} (\tilde x_s), \tilde u_s\dot h_s \right\>\, ds
+\sum_{i=1}^n\int_0^t \nabla d\left( { \log} k_{1-s} \right) (\tilde u_se_i, \tilde u_sh_s) dB_s^i\\
&+\int_0^t d\Phi( \tilde u_sh_s)\;ds
+\f 1 2 \int_0^t\Ric\left(\nabla{\log} {k_{1-s}(\tilde x_s)}, \tilde u_sh_s\right)\;ds.
\end{equs}
We thus obtain
 the following relation:
\begin{equs}
\alpha_t=&\left\< \nabla {\log} {k_{1-t}(\tilde x_t)}, \tilde u_t h_t\right\>+
 \f 1 2\int_0^t D \log k_{1-s}(\Ric^{\sharp}(\tilde u_s h_s))ds\\=&
\left\<\nabla \ {\log} {k_{1-t}(\cdot)}, \tilde u_t h_t\right\>-\int_0^t \left\< \nabla \Phi, \tilde u_sh_s\right\> ds\\
&-\sum_{i=1}^n\int_0^t \nabla d\left( { \log} k_{1-s} \right) (\tilde u_se_i, \tilde u_sh_s) \,dB_s^i.
\end{equs}
We will prove that each of the terms on the right hand side converges as $t$ approaches $1$. Furthermore
$\left\<\nabla \ {\log} {k_{1-t}(\cdot)}, \tilde u_t h_t\right\>$ converges to zero.
 We first observe that there exists a constant $C$ such that $\E[ r(\tilde x_t)^p]
\le Ct^{\f p2}$.
Indeed  $r_t:=\rho(\tilde x_t, y_0)$ satisfies
\begin{equs}
r_t-r_0=&
\beta_t +\int_0^t{1\over 2} \Delta r(\tilde{x}_s) ds-
\int_0^t  {r(\tilde{x}_s)  \over 1-s}ds
 -{1\over 2} \int_0^t \left\< \nabla r,  \nabla \log J \right\>_{\tilde{x}_s}ds\\
 =&\beta_t +\int_0^t \f {n-1} {2r_s} ds-
\int_0^t  \f{r_s}  {1-s}ds,
\end{equs}
where $\beta_t$ is a one dimensional Brownian motion and we have used the fact that
$\Delta r={n-1 \over r} +\<\nabla r, \nabla \log J\>$. Thus $r_s$ is a Bessel bridge starting at $\rho(x_0, y_0)$ and ending at $0$ at time $\term$. In particular $\lim_{t\uparrow 1} \tilde x_t=y_0$ and $(r_t, t\le 1)$  is a continuous semi-martingale. Furthermore for any $p>1$, $\E [r(\tilde x_t)^p] \le C t^{\f p 2}$. If $K_t$ denotes the standard Gaussian kernel on $\R^n$ then for $z_1, z_2\in \R^n$ with $|z_1-z_2|=\rho(x_0, y_0)$,
$$
\E[ r(\tilde x_t, y_0)^p]=\f 1 {K_T(z_1, z_2)}\int_{\R^n} |z-z_2|^p K_t(z_1, z) K_{1-s}(z, z_2) dz\le C |z-z_1|^{\f p 2}.$$ 
 We also know that 
$\E [e^{2a r_t^2}]<\infty$ for some $a$ and $t\le 1$, involking condition C2.

We show below that (\ref{singular}) has a limit as $t\to 1$.
Firstly, since $|d\Phi|\le c e^{ar^2}$,  
$$\lim_{t\to 1}\E \left[\int_t^1 \left\< \nabla \Phi, \tilde u_sh_s\right\> ds\right]^2= 0.$$ 
 We work with the first term on the right hand side: $$\left\<\nabla \ {\log} {k_{1-t}(\cdot, y_0)}, \tilde u_t h_t\right\>=\f { r(\tilde x_t) \<\nabla r(\tilde x_t), \tilde u_t h_t\>}{1-t} +
\<\nabla {\log} J_{\tilde x_t}^{-\f 12},\tilde u_t h_t\>.$$
 Since  $|d( {\log} J_x^{-\f 12})|\le c e^{ar(x)^2}$, 
$\lim_{t\to 1}\<\nabla {\log} J_{\tilde x_t}^{-\f 12},\tilde u_t h_t\>$ converges  in $L^2(\Omega)$. Thus \begin{equation}\label{2.1}
\lim_{t\uparrow 1} \E \left|\<\nabla {\log} J^{-\f 12}(\tilde x_t),\tilde u_t h_t\>\right|^2=0,
\end{equation}
using the fact that $h_t\to 1$.
Also, by the symmetry of the Euclidean bridge,  $\E[ r^2(\tilde x_t, y_0)]\le C\left( t\wedge (1-t)\right)$ and hence
$$\E \left |\f {r (\tilde x_t) \left\<\nabla r(\tilde x_t),\tilde u_t h_t\right\>}{1-t} \right|^2\le  \f {|h_t|^2} {1-t}.$$
Since $h_1=0$, and $h\in H$,
$$\f{ |h_t|^2} {1-t}=\f 1 {1-t}\left| \int_t^1 \dot h_s dr\right|^2\le \int_t^1 |\dot h_s|^2 ds\to 0,$$
as $t\to 1$, using the fact that $h\in H$.
We conclude that 
$$\lim_{t\to 1}\E \left[\left\<\nabla \ {\log} {k_{1-t}(\cdot)}, \tilde u_t h_t\right\>\right]^2=0.$$
 For the final term we observe that
$$\nabla d\left( { \log} k_{1-s} \right) (\tilde u_se_i, \tilde u_sh_s)
=-\f {\nabla r(\tilde  u_se_i)\nabla r(\tilde u_s h_s)}{1-s}-\f {r \nabla dr (\tilde u_se_i, \tilde u_sh_s)}{1-s}.$$
We further observe that the Frobenius norm of
the Hessian of the distance function satisfies:
$$\|\nabla d r\|_F:=\left(\sum_{i,j=1}^n \<\nabla_{E_i} {\partial r}, E_j\>\right)^{\f 12} \le \f 1{\sqrt{n-1}}\Delta r
\le   \f 1{\sqrt{n-1}} \left({n-1 \over r} +\<\nabla r, \nabla \log J\>\right).$$
Since $|\nabla \log J|\le ce^{a r^2}$,  for some constant $C$, which may depend on $n$,
$${\begin{split}
&\E \left[\sum_{i=1}^n\int_0^t \nabla d\left( { \log} k_{1-s} \right) (\tilde u_se_i, \tilde u_sh_s) \,dB_s^i\right]^2\\
&\le  C\int _0^t \f {|h_s|^2} {(1-s)^2}ds\le C\f { |h_t|^2} {1-t}+4C\int_0^t |\dot h_s|^2 ds.
\end{split}}$$
This follows from the following standard computation, $$\int_0^t  \f {|h_s|^2} {(1-s)^2}ds
=   \f {|h_t|^2} {1-t}-\int_0^t  \f {\<h_s, 2\dot h_s\>} {(1-s)^2}ds
\le \f { |h_t|^2} {1-t}+\f 1 2 \int_0^t |h_s|^2 ds+2\int_0^t |\dot h_s|^2 ds.$$
This concludes the proof of the convergence of the integral. The required identity follows from the formula,
 given earlier, for $\alpha_t$.
\end{proof}

 Let $u_t$ be the solution to the equation 
 $du_t = \sum_{i=1}^n H_i(u_t) \circ dB^i_t$ with initial value $u_0 \in \pi^{-1}(x_0)$. Then $x_t := \pi (u_t)$ is a Brownian motion on $M$ starting at $x_0$ and
 the integration by parts formula holds on $L^2(C_{x_0}M; \mu)$.  
  For any $F,G\in \Cyl$,  and $h\in H(T_{x_0}M)$ with $h(0)=0$, $d$  is the differential on $L^2(C_{x_0}M)$ with respect to the Brownian motion measure:
\begin{equation}
\label{ibp}
\E [dF(u_\cdot h_\cdot) G]=-\E [FdG(u_\cdot h_\cdot)]
+\E \left[FG\int_0^1 \<\dot h_s+\f 1 2{u_s^{-1} {\Ric}^{\sharp}(u_sh_s)}, dB_s\>\right].
\end{equation}
If $M$ is compact, see e.g. B. Driver \cite{Driver92}. This is also known to hold if the Ricci curvature is bounded from below.
The divergence of $u_\cdot h_\cdot$ is
$$\div(u_\cdot h_\cdot)=\int_0^1 \<\dot h_s+\f 1 2{u_s^{-1} {\Ric}^{\sharp}_{u_s}(u_sh_s)}, dB_s\>.$$
%For non-compact manifolds there are a number of versions of this theorem, we give the conditions used in X. Chen \cite{Chen-thesis}.
%%{\bf Assumption 1.}
%(a) $\underline {\Ric}\le -K(1+r^2)$; (b) For all $x,y\in M$ and $t\in (0,1]$,
%$p_t(x, y)\le \alpha(t)$  and
% $|\nabla \log p_t(x,y)|\le C (\f {\rho(x,y)}t+\f 1 {\sqrt t})$ where $C$ is a constant and $\alpha$ is a non-increasing function.
%Under condition (1), it is well known that the Brownian motion has infinite life time,  see e.g. \cite{Azencott}, and
%$\sup_{x\in K}\E \int_0^1 \E |\Ric_{x_s}|^2 ds<\infty$ for any compact set $K$, see e.g. \cite{Li-flow}.

The following lemma completes the proof of Theorem \ref{thm1}.
\begin{lemma}
Suppose stochastic completeness,  {\bf C2-C4}, and suppose that the integration by parts formula (\ref{ibp}) holds for the Brownian motion measure. Then the conclusion of Theorem \ref{thm1} holds.
\end{lemma}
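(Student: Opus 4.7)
The plan is to reduce the statement to the integration by parts formula~(\ref{ibp}) for the Brownian motion measure via a Girsanov change of measure, using Lemma~\ref{lemma1} to absorb the singular contributions at the terminal time.

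First, I fix a cylindrical pair $F,G\in\Cyl$ supported at times $t_1<\cdots<t_m\le t_0$ for some $t_0<1$. Girsanov's theorem realises the law of $(\tilde x_s)_{s\le t_0}$ on $(\Omega,\P)$ as $Z_{t_0}\,d\mu|_{\F_{t_0}}$, where $\mu$ is the Brownian motion measure of $(x_s)_{s\le t_0}$ and
$$Z_{t_0}=\exp\!\Bigg(\int_0^{t_0}\<u_s^{-1}A_s(x_s), dB_s\>-\f12\int_0^{t_0}|A_s(x_s)|^2\,ds\Bigg).$$
Applying It\^o's formula to $\log k_{1-s}(x_s,y_0)$ together with the identity $(\f12\Delta+\partial_r)\log k_{1-s}+\f12|\nabla\log k_{1-s}|^2=\Phi$ derived inside the proof of Lemma~\ref{lemma1} collapses this into the explicit form
$$Z_{t_0}=\f{k_{1-t_0}(x_{t_0},y_0)}{k_1(x_0,y_0)}\,\exp\!\Bigg(-\int_0^{t_0}\Phi(x_s)\,ds\Bigg),$$
which, under C4, is a genuine $\P$-martingale on $[0,t_0]$ and is smooth enough for the computations below.

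Next, I rewrite the LHS of Theorem~\ref{thm1} as $\E[d(FG)(u_\cdot h_\cdot)\,Z_{t_0}]$ (now an expectation under the Wiener measure of $x$) and apply the hypothesised Brownian motion integration by parts~(\ref{ibp}) with test pair $FG$ and $Z_{t_0}$. Differentiating the product form of $Z_{t_0}$ directionally gives
$$dZ_{t_0}(u_\cdot h_\cdot)=Z_{t_0}\Big[\<\nabla\log k_{1-t_0}(x_{t_0},y_0), u_{t_0}h_{t_0}\>-\int_0^{t_0}d\Phi(u_sh_s)\,ds\Big].$$
Pushing each resulting term back to the bridge picture by Girsanov in the reverse direction, and using $dB=d\tilde B+\tilde u^{-1}A\,ds$, decomposes the BM divergence $\int_0^1\<\dot h+\f12\ric_u(h_s), dB_s\>$ into the bridge divergence $\int_0^1\<\dot h+\f12\ric_{\tilde u}(h_s), d\tilde B_s\>$ plus the drift quantity $\alpha_{t_0}$ studied in Lemma~\ref{lemma1}.

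At this stage Lemma~\ref{lemma1} is the key input: the explicit identity it provides for $\alpha_{t_0}$ cancels the boundary term $\<\nabla\log k_{1-t_0}(\tilde x_{t_0},y_0), \tilde u_{t_0}h_{t_0}\>$ coming from $dZ_{t_0}$ against the Girsanov drift contribution, and the Weitzenb\"ock/$\Phi$ rearrangement performed inside that proof reorganises the surviving Hessian stochastic integral $\sum_i\int\nabla d\log k_{1-s}(\tilde u_se_i,\tilde u_sh_s)\,dB_s^i$ together with the $d\Phi$ remainder into exactly the structure $\int_0^1\<\dot h+\f12\ric_{\tilde u}(h_s), d\tilde B_s\>+\int_0^1 d\Phi(\tilde u_sh_s)\,ds$ claimed on the right-hand side of the theorem.

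Finally, I let $t_0\uparrow 1$. The LHS is independent of $t_0$ once $t_0\ge t_m$. On the right, $L^2$ convergence of $\int_0^{t_0}\<\dot h+\f12\ric_{\tilde u},d\tilde B\>$ and the vanishing of $\<\nabla\log k_{1-t_0}(\tilde x_{t_0}),\tilde u_{t_0}h_{t_0}\>$ are exactly what Lemma~\ref{lemma1} provides, while convergence of $\int_0^{t_0}d\Phi(\tilde u_sh_s)\,ds$ follows from C3 together with the Bessel-bridge moment estimate $\E[r(\tilde x_s)^p]\le Cs^{p/2}$ established inside that same proof. The hard part is precisely this terminal limit: $A_s$ has a $(1-s)^{-1}$ blow-up, so none of $Z_{t_0}$, $dZ_{t_0}$, or the Girsanov drift correction has a separate limit; only the precise cancellation orchestrated by Lemma~\ref{lemma1} converges. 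Conditions C1--C4 are the inputs that turn these cancellations into genuine $L^2(\P)$ statements and ensure that~(\ref{ibp}) may be applied to the non-cylindrical density $Z_{t_0}$.
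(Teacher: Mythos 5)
Your proposal follows essentially the same route as the paper: the Girsanov density $M_t=\frac{k_{1-t}(x_t,y_0)}{k_1(x_0,y_0)}e^{-\int_0^t\Phi(x_s)ds}$ (your $Z_{t_0}$), application of the Brownian-motion formula (\ref{ibp}) with the density absorbed into the test function, the directional derivative $dM_t(u_\cdot h_\cdot)$ producing the boundary term $\<\nabla\log k_{1-t},u_th_t\>$ and the $\int d\Phi$ term, and the limit $t\uparrow 1$ controlled by Lemma \ref{lemma1}. The only difference is bookkeeping — you cancel the boundary term against $\alpha_{t_0}$ via the identity in Lemma \ref{lemma1}, whereas the paper carries it along and kills it in the limit using (\ref{2.1}) — which is the same mechanism.
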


Let $h\in H^{0}$.
Our plan is to pass the integration on the path space to the pinned path space
by a Girsanov transform. 
We first observe that  if $F\in \Dom(d)$, adapted to $\G_t$ where $t<1$, then
$$\E[ dF(\tilde u h_\cdot)]=\E \left[dF ( u h_\cdot)\f {k_{1-t}(x_t)}{k_1(x_0)}e^{-\int_0^t \Phi(x_s)ds}\right].$$
In fact, the formula for the probability density between the original probability measure, on $\G_t$,  and the one for which
$B_t-\int_0^t \<u_sdB_s, \nabla {\log} k_{1-s}(x_s)\>$ is a Brownian motion, is:
\begin{equs}
M_t=\exp \left[  \sum_{i=1}^m\int_0^\tm  \<\nabla {\log} k_{\term-\stm}  ( x_\stm, y_0),  u_\stm e_i\> dB_\stm^i -{1\over 2} \int_0^\tm | \nabla {\log} k_{\term-\stm}(x_\stm, y_0) |^2 d\stm\right].
\end{equs}
By an application of It\^o's formula, and identities (\ref{formulas}) in the proof of Lemma \ref{lemma1},
$$M_t=\f {k_{1-t}(x_t, y_0)} {k_1(x_0, y_0)}  \exp  \left(-\int_0^t \Phi(x_s)ds\right). $$
Since the Brownian motion and the semi-classical bridge are conservative, then $(M_s, s\le t)$ is a martingale for any $t<1$.

%On manifolds with a pole, there exists an SDE driven by $n$ independent one dimensional Brownian motions where $n$ is the dimension of the manifold, whose solution is a Brownian motion,  from which and a result in \cite[K. D. Elworthy and Xue-Mei Li]{Elworthy-Li-icm}, one can also deduce that $BC^1$ functions on the path space belong to the domain of $d$ on the path space. 
Since $\Phi$ is bounded from below and has bounded derivative, 
$e^{-\int_0^t \Phi(\tilde x_s) ds}$ can be approximated by smooth cylindrical functions in the domain of $d$.
%which also holds if $\Phi=f(r^2)$ where $f$ is a $BC^1$ function. 
Next we observe that
$$\nabla {k_{1-s}(\cdot, y_0)}= {2\pi (1-s)^{-\f n 2}}e^{-\f {r^2}{2(1-s)}}J^{-\f 12}
\left(-\f {r \nabla r }{1-s} +\nabla {\log} J^{-\f 12}\right),$$
is bounded and smooth, so $\f {k_{1-t}(x_t, y_0)}{k_1(x_0, y_0)}e^{-\int_0^t \Phi(x_s)ds}$ belongs to the domain of $d$. 
Consequently, for $F, G$ measurable with respect to the canonical filtration up to time $t<1$, we apply (\ref{ibp}) to see \begin{equs}
&\E [GdF(\tilde u_\cdot h_\cdot] =\E \left [dF ( u _\cdot h_\cdot) G(\tilde x_\cdot)M_t\right]\\
&=\E \left[(FG)( x_\cdot ) M_t\div(u_\cdot h_\cdot)\right]
-\E \left[(FG) ( x_\cdot ) dM_t(u_\cdot h_\cdot)\right]-\E [F(x_\cdot) dG( uh) M_t]\\
=&\E \left[F(  x_\cdot )M_t\int_0^t \<\dot  h_s+\f 1 2{ u_s^{-1} {\Ric}^{\sharp}_{u_s}(h_s)}, dB_s\>\right]-\E [F(\tilde x_\cdot) dG(\tilde uh) ]\\
&-\E \left[F ( x_\cdot ) M_td\left({\log} {k_{1-t}(x_t, y_0)}-\int_0^t \Phi(x_s)ds\right)(u_\cdot h_\cdot)\right]\\
=&\E \left[F (\tilde x_\cdot ) \int_0^t \<\dot  h_s+\f 1 2{\tilde u_s^{-1} {\Ric}^{\sharp}_{\tilde u_s}(h_s)}, d\tilde B_s\>\right]-\E [F(\tilde x_\cdot) dG(\tilde uh) ]\\
&-\E \left[F(\tilde x_\cdot)\< \nabla {\log} k_{1-t} (\tilde x_t, y_0),\tilde u_t h_t\> -F(\tilde x_\cdot) \int_0^t d \Phi(u_s h_s)ds\right].
\end{equs}
We take $t\uparrow 1$, by (\ref{2.1}) and Lemma \ref{lemma1}, $\lim_{t\uparrow 1} \<\nabla {\log} k_{1-t} (\tilde x_t, y_0),\tilde u_t h_t\>=0$ in $L^2$,
\begin{equs}
&\E [GdF(\tilde u_\cdot h_\cdot] +\E [F(\tilde x_\cdot) dG(\tilde uh) ]\\
=&\E \left[(FG) (\tilde x) \int_0^1 \<\dot h_s+\f 1 2{\tilde u_s^{-1} {\Ric}^{\sharp}(\tilde u_s h_s)}, d\tilde B_s\>\right]
+ \E\left[(FG)(\tilde x)  \left( \int_0^1 \;d \Phi(\tilde u_s h_s)ds\right) \right].
\end{equs}
In particular, $\Dom(d^*)\supset \Cyl$,
$$d^*1=\int_0^1 \<\dot h_s+\f 1 2{\tilde u_s^{-1} {\Ric}^{\sharp}(\tilde u_s h_s)}, d\tilde B_s\>+ \left( \int_0^1 \;d \Phi(\tilde u_s h_s)ds\right),$$
and $d^*$ is a closable operator. This completes the proof of the Lemma.

\subsection{Comment}

Let us consider briefly for which manifolds our assumptions on $\Phi$ hold.
Denote by $\partial r$ the radial curvature which,  evaluated at $x\in M$, is the unit vector field tangent to the normal geodesic between $x$ and the pole pointing away from the pole.  The Hessian of $r$ describes the change of the Riemannian tensor in the radial directions,
 while the change of the volume form in the radial direction is associated to the Laplacian of $r$. More precisely we have:
 $$L_{\partial r}g=2 \Hess (r), \qquad L_{\partial r} d \text{vol}=\Delta r d \text{vol}, \qquad \Delta r=\f {n-1} r+d r (\nabla \log J),$$ 
indicating how the Jacobian determinant adjusts the speed of the convergence so that the semi-classical bridge behaves exactly like 
the Euclidean Brownian bridge.

  For the Hyperbolic space,  $\Phi$ is bounded from the formula below,
$\Phi=-\f 1 8 (n-1)^2c^2+\f 1 8(n-1)(n-3) \left(\f 1 {r^2}- c^2 \sinh^{-2} (rc)\right)$.
If $(N, o)$ is a model space, 
%by which we mean that every isometry $T_oN\to T_oN$ is the differential of an isometry of $N$ fixing $o$, 
 its Riemannian metric in the geodesic polar coordinates takes the form $g=dr^2+f^2(r) d\theta^2$, then 
on $N\setminus\{o\}$, $\Hess (r)=\f {f'(r)}{f(r)}(g -dr\otimes dr)$. 
For the hyperbolic space of constant sectional
curvature $-c^2$, the Riemannian metric is $g=dr^2+(\f 1 c \sinh (cr))^2 d\theta^2$. Also $\Hess (r^2)=2 dr\otimes dr +2cr \coth (cr) (g -dr\otimes dr)$. Furthermore its Jacobian determinant 
is  $J= (\f {\sinh (cr)}{c r})^{(n-1)}$.

%Also, $\partial_r g=2 \Hess r$.
For manifolds of non-constant curvature we may use the Hessian comparison theorem.
The radial curvature at a point $x\in M$ is the sectional curvature in a plane at $T_xM$ containing the radial vector field $\partial_r$.   Let us recall a comparison theorem from \cite[R. E. Greene and H. Wu]{Greene-Wu}:
let $(N, o)$ be another Riemannian manifolds with a pole which we denote by $o$. Suppose that $(\gamma(t), t\in [0, b])$ is a normal geodesic  in $M$ 
with the initial point $\pole$ and $(\gamma_2(t): t\in [0, b])$ a normal geodesic in $N$ from $o$.  
We suppose that the radial curvature at $\gamma_2(t)$ is greater than or equal to the radial curvatures at $\gamma(t)$. 
By this we mean the curvature operator $\Rc$ on $M$ and $\Rc_2$ on $N$ satisfy the relation
$\left\<\Rc(w, \dot \gamma)w, \dot \gamma\right\>\le \left\<\Rc_2(w_2, \dot \gamma_2)w_2, \dot \gamma_2\right\>$
for any unit vectors  $w\in ST_{\gamma(t)}M$ and $w_2\in ST_{\gamma_2(t)}N$,
satisfying the relation $\<w, \partial_r\>=\<w_2,  \partial_r\>$ where $\partial_r$ denotes the radial vector fields for both manifolds.
Then for any nondecreasing function $\alpha: \R_+\to \R$, $\Hess (\alpha\circ r_2)(\gamma_2(t))\le \Hess (\alpha\circ r)(\gamma(t))$,
where $r_2$ is the Riemannian distance function on $N$ from $o$.

%\section{A Defective Clark-Ocone Formula and Poincar\'e Inequality}
%Denote by $W_t$  the solution to the following two equation
%\begin{equation*}\label{damped}
%{D \over dt }W_t=-{1\over 2}  {\Ric}_{x_t}^{\sharp}(W_t), \quad W_0 = {\id}_{T_{x_0}M}
%\end{equation*}
%Here ${D\over dt}W_t := u_t \frac{d}{dt} (u_t^{-1} W_t)$. 
%The the following integration by parts formula also holds for $F\in BC^1$
%$$\E dF\left( W_t\int_0^t (W_r)^{-1} (u_r \dot h_r) dr\right)
%=\E F \int_0^t \<h_s, dB_s\>.$$
\section{Conclusion}
We have proved an integration by parts formula on $L^2(C_{x_0, y_0}, \nu)$ where
$\nu$ is the probability measure induced by the semi-classical bridge. 
A probability measure $\mu$ on the path space is said to satisfy the Poincar\'e inequality if there exists a constant $c$ such that
$$\int \left(F- \int F d\mu \right)^2 d\mu \le c\int \left(|\nabla F|_{\HH}\right)^2d\mu $$
for all $F\in \dom(d)$ and the inner product on $\HH$ can be defined either by stochastic parallel translation or by damped stochastic parallel translation. 

{\bf Conjecture.}  A Poincar\'e inequality holds for the semi-classical bridge measure on
a class of Cartan-Hadamard manifolds. Of course it is reasonable to assume  growth conditions on $J$, $J^{-1}$ and suitable conditions on the range of the sectional curvature. 

We remark that, for the Brownian bridge measure the question whether the Poincar\'e inequality holds is not solved satisfactorily. The spectral gap inequality is known to hold for Gaussian measure on $\R^n$ by L. Gross \cite{Gross}, who also 
made a conjecture on its validity.  The spectral gal inequality has been proven to hold on the hyperbolic space  \cite{CLW},
see also \cite{Aida,Gong-Ma,Airault-Malliavin,Fang99,Elworthy-LeJan-Li-book}. 
A counter example exists \cite{Eberle}, see also the more recent articles \cite{Hino, Gong-Rockner-Wu}.

\def\cprime{$'$} \def\cprime{$'$}

\end{document}